\newlength{\standardunitlength}
\newtheorem{prop}{Proposition}[section]
\newtheorem{lemma}[prop]{Lemma}
\newtheorem{theorem}[prop]{Theorem}
\newtheorem{example}[prop]{Example}
\begin{document}

\title [Combinatorics of balanced carries] {Combinatorics of balanced carries}

\author{Persi Diaconis}
\address{Department of Mathematics and Statistics\\ Stanford University\\
        Stanford, CA, 94305}

\author{Jason Fulman}
\address{Department of Mathematics\\
        University of Southern California\\
        Los Angeles, CA, 90089}
\email{fulman@usc.edu}

\keywords{Carries, Markov chain, Foulkes character, Eulerian idempotent}

\date{September 19, 2013}

\begin{abstract} We study the combinatorics of addition using balanced digits, deriving an analog of Holte's ``amazing matrix'' for carries
in usual addition. The eigenvalues of this matrix for base $b$ balanced addition of $n$ numbers are found to be $1,1/b,\cdots,1/b^{n}$,
and formulas are given for its left and right eigenvectors. It is shown that the left eigenvectors can be identified with hyperoctahedral Foulkes
characters, and that the right eigenvectors can be identified with hyperoctahedral Eulerian idempotents. We also examine the carries that occur when a column of balanced
digits is added, showing this process to be determinantal. The transfer matrix method and a serendipitous diagonalization are used to study this
determinantal process.
\end{abstract}

\maketitle

\section{Introduction} \label{intro}

This paper studies the combinatorics of ``carries'' in basic arithmetic, using balanced digits. To begin we describe the motivation for using balanced digits. When ordinary integers are added, carries occur. Consider a carries table with rows and columns indexed by digits $0,1,\cdots,b-1$ (working base $b$) and a carry at $(i,j)$ if $i+j \geq b$. Thus when $b=5$, labeling the rows and columns in the order $0,1,2,3,4$ the carries matrix is

\begin{equation*}
\begin{pmatrix}
0 & 0 & 0 & 0 & 0\\
0 & 0 & 0 & 0 & b \\
0 & 0 & 0 & b & b \\
0 & 0 & b & b & b \\
0 & b & b & b & b
\end{pmatrix}.
\end{equation*}

In general there are ${b \choose 2}$ carries (so 10 when $b=5$). If digits are chosen uniformly at random, the chance of a carry is ${b \choose 2}/b^2 = \frac{1}{2} - \frac{1}{2b}$.

The digits $0,1,\cdots,b-1$ can be thought of as coset representatives for $b \mathbb{Z} \subseteq \mathbb{Z}$ and the carries are cocycles \cite{I}. For $b$ odd (as assumed throughout this paper), consider instead the balanced representatives $0, \pm 1, \cdots \pm (b-1)/2$. One motivation for using balanced representatives is that they lead to fewer carries. For example, when $b=5$, writing $\overline{j}$ for $-j$, the digits are $\{0,1,\overline{1},2,\overline{2} \}$. Labeling the rows and columns in the order $\overline{2},\overline{1},0,1,2$, the carries matrix is

\begin{equation*}
\begin{pmatrix}
\overline{b} & \overline{b} & 0 & 0 & 0\\
\overline{b} & 0 & 0 & 0 & 0 \\
0 & 0 & 0 & 0 & 0 \\
0 & 0 & 0 & 0 & b \\
0 & 0 & 0 & b & b
\end{pmatrix}.
\end{equation*} (For example, $(-2) + (-2) = -4 = -5+1$). Here there are 6 carries versus 10 for the classical choice. For general $b$, balanced carries lead to
$(b^2-1)/4$ carries. This is the smallest number possible \cite{A},\cite{DSS}.

Balanced digits are elementary but unfamiliar: for example in base $5$, $13$ is equal to $1 \overline{2} \overline{2}$, and $-9$ is equal to $\overline{2} 1$.
Negating numbers negates the digits and the sign of the number is the sign of its left-most digit. Balanced digits were introduced in 1726 by Colson \cite{Co}; see Cajori \cite{Ca} or Chapter 4 of Knuth \cite{Kn} for history and applications.

Of course, balanced digits may be used for addition with larger numbers. For example:

\begin{small}
\begin{equation*}
\begin{array}{l}
\overline{1} 0 1 1 0 0 0\\
\\
1 \overline{2} 1 2 2 \overline{1} 0\\
1 \overline{1} \overline{1} 2 2 2 1 \\ \hline
\\
1 2 1 0 \overline{1} 1 1
\end{array}
\end{equation*}\end{small} Here the numbers along the top row are the carries. When two numbers are added the possible carries are
$0,1,\overline{1}$. If $n$ numbers are added, the possible carries are $- \lfloor \frac{n}{2} \rfloor, \cdots, \lfloor \frac{n}{2} \rfloor$.

Suppose now that balanced digits base $b$ are used, $n$ numbers are added, and that the digits are chosen uniformly at random in
$\{ \overline{\frac{b-1}{2}}, \cdots \frac{b-1}{2} \}$. Consider the carries along the top, from right to left as $\kappa_1,\kappa_2,\cdots$.
(Thus $\kappa_1=0$ always). It is easy to see that the $\{\kappa_i\}$ form a Markov chain on the set
$- \lfloor \frac{n}{2} \rfloor, \cdots, \lfloor \frac{n}{2} \rfloor$. For the classical choice of digits, this Markov chain
was analyzed by Holte \cite{Ho}, with follow-up reviewed later in this introduction.

Let $K(i,j)$ denote the transition matrix of the balanced carries Markov chain. Of course $K(i,j)$ depends on $b$ and $n$ but this is suppressed.
In the case that $n$ is odd, we find that the matrix $K(i,j)$ is the same as Holte's amazing matrix for usual carries. However when $n$ is even, new results emerge.
In particular, the stationary distribution of the Markov chain is given by
\[ \pi(j-n/2) =  \frac{\overline{A(n,j)}}{2^n n!} \ \ \ 0 \leq j \leq n.\] Here the $\overline{A(n,j)}$ are ``signed Eulerian numbers'',
defined carefully in Section \ref{lefteig}. In particular, the chance of a carry $-1,0,1$ is $\frac{1}{8}, \frac{3}{4}, \frac{1}{8}$ respectively.
We find explicit formulae for the left and right eigenvectors of the balanced carries chain; we show that the left eigenvectors can be identified with Miller's
hyperoctahedral Foulkes characters \cite{Mi}, and that the right eigenvectors can be identified with hyperoctahedral Eulerian idempotents of
Bergeron and Bergeron \cite{BB}. Before we finished writing this paper, the preprint \cite{NS} appeared, and there is some overlap with our work;
one can deduce our formula for the left eigenvectors of $K(i,j)$ from their paper.

Next we give a brief historical overview of the ``type A'' work that motivated this paper. The Markov chain of carries when $n$ random integers
are added mod $b$ (with the usual choice of digits) was first studied by Holte \cite{Ho}, who dubbed the matrix ``amazing''. He found that the
eigenvalues are $1,1/b,1/b^2,\cdots,1/b^{n-1}$, and identified the stationary distribution as $A(n,k)/n!$, with $A(n,k)$ the kth Eulerian number--the
number of permutations in $S_n$ with $k$ descents. This says what percent of carries are $k$ ($0 \leq k \leq n-1$). Holte further found closed
formulae for the left and right eigenvectors of the transition matrix.

The connection between carries and shuffling was developed by the present authors. The first proofs used generating functions and symmetric function theory
\cite{DF}, and an ``aha'' bijective proof was later found in \cite{DF2}. An algebraic combinatorics proof appeared in \cite{NT} and Pang \cite{P} studied the entire
descent pattern after shuffles using Hopf algebras. We note that Holte's amazing matrix also appears in algebraic geometry, giving the Hilbert series of the Veronese embedding \cite{BW}, \cite{DF}.

The connection of carries with Foulkes characters and Eulerian idempotents appears in \cite{DF3}, which identifies the left eigenvectors of Holte's matrix with the Foulkes characters of the symmetric groups, and the right eigenvectors with the Eulerian idempotents. The transition to other types of reflection groups is studied in Miller \cite{Mi}, and we identify the left eigenvectors of the balanced carries chain with Miller's hyperoctahedral Foulkes characters. We give a new proof of Miller's recurrence for
hyperoctahedral Foulkes characters. We also identify the right eigenvectors of the balanced carries chain with the Eulerian idempotents of the hyperoctahedral groups.
For a different proof connecting the inverse of the Foulkes character table with Eulerian idempotents, one can see \cite{Mi}.

The above results describe ``carries across the top'', when several long numbers are added. It is also fruitful to study the ``carries down a column'' when a single column of random digits is added. For ordinary addition, this was studied in \cite{borodin}, which showed that the positions of the carries form a determinantal point process, with explicitly computable correlation functions. We use the transfer matrix method and a serendipitous diagonalization to show that the same is true for ``carries down a column'' when a column of random balanced digits is added.

Carries and cocycles make sense for any subgroup $H$ of any group $G$. Choosing coset representatives $X$ for $H$ in $G$ and then picking elements $x$ in $X$ from some natural probability distribution leads to a carries process. This is developed in \cite{borodin} and \cite{DSS}. Developing a parallel theory involving a nested sequence of subgroups (as in the present paper) suggests a world of math to be done.

The organization of this paper is as follows.  Section \ref{amazing} begins by deriving an analog of Holte's amazing matrix for balanced carries. When an odd number of numbers is added, we show that this reduces to Holte's amazing matrix for ordinary carries, and when an even number of numbers is added, we show that it reduces to the type $B$ carries chain of \cite{DF}. The argument is similar to Holte's, and the result can also be deduced from the paper \cite{NS}. Section \ref{lefteig} shows that the eigenvalues of the carries chain are $1,1/b,\cdots,1/b^n$ and studies its left eigenvectors, giving an explicit formula and identifying them with Miller's hyperoctahedral Foulkes characters. Section \ref{righteig} gives a formula for its right eigenvectors, relating them to hyperoctahedral Eulerian idempotents. Section \ref{point} studies ``balanced carries down a column'' as a determinantal point process.

\section{Amazing matrix for balanced carries} \label{amazing}

We work in an odd base $b$, with digits $0, \pm 1, \pm 2, \cdots , \pm (b-1)/2$. For reasons which become clear in the remarks
following Theorem \ref{trans}, we add an even number $n$ of numbers. Then the
carries range from $-\frac{n}{2}$ to $\frac{n}{2}$, and form a Markov chain on the set $\{ - \frac{n}{2}, - \frac{n}{2} + 1, \cdots,
\frac{n}{2} -1, \frac{n}{2} \}$. Theorem \ref{trans} works out the transition matrix for this Markov chain.

\begin{theorem} \label{trans} Let $K(i,j)$ be the transition probability of the balanced carries Markov chain on the set
$\{ - \frac{n}{2}, - \frac{n}{2} + 1, \cdots, \frac{n}{2} -1, \frac{n}{2} \}$, corresponding to the addition of an even number $n$ of numbers.
Then $K(i,j)$ is equal to both:
\begin{enumerate}
\item The coefficient of $x^{jb+(n+1)(b-1)/2-i}$ in $(1+x+\cdots+x^{b-1})^{n+1}/b^n$.

\item \[ \frac{1}{b^n} \sum_{l=0}^{\lfloor j+ \frac{n+1}{b} \frac{b-1}{2} - \frac{i}{b} \rfloor}
(-1)^l {n+1 \choose l} {n+jb+(n+1) \frac{b-1}{2} - i - lb \choose n} .\]
\end{enumerate}
\end{theorem}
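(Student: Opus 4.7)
The plan is to compute $K(i,j)$ directly from the transition step of the chain, then reformulate the result as a single coefficient extraction. Suppose the current carry is $i$ and the fresh column of digits $d_1,\dots,d_n$ is drawn uniformly at random from $\{-(b-1)/2,\dots,(b-1)/2\}^n$. The new carry $j$ and new balanced digit $r\in\{-(b-1)/2,\dots,(b-1)/2\}$ of the running sum are determined by the identity $i+\sum_{k=1}^n d_k = jb + r$. Hence $K(i,j)$ equals the probability that $\sum_k d_k$ lies in the interval $\{jb-i-(b-1)/2,\dots,jb-i+(b-1)/2\}$ of $b$ consecutive integers.

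First I would pass to nonnegative digits via $e_k = d_k + (b-1)/2 \in \{0,\dots,b-1\}$. Each $e_k$ has probability generating function $(1+x+\cdots+x^{b-1})/b$, so $\Pr[\sum_k e_k = m] = b^{-n}[x^m](1+x+\cdots+x^{b-1})^n$. Translating the interval for $\sum_k d_k$ by $n(b-1)/2$ gives
\[ K(i,j) = b^{-n}\sum_{m=m_{\min}}^{m_{\min}+b-1} [x^m]\,(1+x+\cdots+x^{b-1})^n,\]
where $m_{\min} = jb - i + (n-1)(b-1)/2$.

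The key step is the elementary identity that for any polynomial $f(x)$, the sum of $b$ consecutive coefficients from $x^{m_{\min}}$ through $x^{m_{\min}+b-1}$ equals the coefficient of $x^{m_{\min}+b-1}$ in $f(x)(1+x+\cdots+x^{b-1})$. Applied to $f(x) = (1+x+\cdots+x^{b-1})^n$, and noting that $m_{\min}+b-1 = jb+(n+1)(b-1)/2-i$, this immediately yields Part (1).

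For Part (2) I would use the factorization $1+x+\cdots+x^{b-1} = (1-x^b)/(1-x)$ to expand
\[(1+x+\cdots+x^{b-1})^{n+1} = \sum_l (-1)^l\binom{n+1}{l} x^{lb} \cdot \sum_k \binom{n+k}{n} x^k.\]
Extracting the coefficient of $x^N$ with $N = jb + (n+1)(b-1)/2 - i$ and imposing $N-lb\ge 0$ (which gives the upper limit $l \le \lfloor N/b \rfloor = \lfloor j + \frac{n+1}{b}\cdot\frac{b-1}{2} - \frac{i}{b}\rfloor$) produces the closed form in Part (2). The only real obstacle is bookkeeping: verifying that the shift in Step 1 collapses to exactly the exponent $jb+(n+1)(b-1)/2-i$ claimed in the theorem. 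There is no conceptual barrier; the argument is a modest adaptation of Holte's method for ordinary carries, with a single extra factor of $(1+x+\cdots+x^{b-1})$ arising because the fresh balanced residue contributes one more uniform factor to the generating function.
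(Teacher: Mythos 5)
Your proposal is correct and follows essentially the same route as the paper: shift the balanced digits to $\{0,\dots,b-1\}$, reduce $K(i,j)$ to the count of digit sums in a window of $b$ consecutive integers, absorb that window into one extra factor of $1+x+\cdots+x^{b-1}$ (the paper does this by introducing an auxiliary variable $Y\in\{0,\dots,b-1\}$, which is the same device as your coefficient-summation identity), and then expand $\left(\frac{1-x^b}{1-x}\right)^{n+1}$ for part (2). The exponent bookkeeping in your argument checks out.
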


As an example of Theorem \ref{trans}, when $n=2$, with the rows indexed by $i=-1,0,1$ and the columns indexed by $j=-1,0,1$, the transition matrix is, for all odd $b$,
\begin{equation*}
K(i,j)=\frac{1}{b^2}\begin{pmatrix}
\frac{b^2+4b+3}{8} & \frac{3}{4} (b^2-1) & \frac{b^2-4b+3}{8}\\
\frac{b^2-1}{8} & \frac{3b^2+1}{4} & \frac{b^2-1}{8}\\
\frac{b^2-4b+3}{8} & \frac{3}{4} (b^2-1) & \frac{b^2+4b+3}{8}
\end{pmatrix}.
\end{equation*}

\begin{proof} Suppose that the carry into a column is $i$, with $-\frac{n}{2} \leq i \leq \frac{n}{2}$. Let the $n$ digits in the column
be $X_1,\cdots,X_n$, with $-(b-1)/2 \leq X_i \leq (b-1)/2$ for all $i$. Then the carry to the next column is $j$ precisely if
\[ jb- (b-1)/2 \leq i+X_1+\cdots+X_n \leq jb+ (b-1)/2.\] Letting $X_i' = X_i + (b-1)/2$ for all $i$, one has that
$0 \leq X_1', \cdots, X_n' \leq b-1$, and that the carry to the next column is $j$ exactly when
\[ jb - (b-1)/2 \leq i + X_1' + \cdots + X_n' - n(b-1)/2 \leq jb + (b-1)/2,\] which is equivalent to
\[ jb + (n-1)(b-1)/2 - i \leq X_1' + \cdots + X_n' \leq jb + (n+1)(b-1)/2 - i .\]

Thus $K(i,j)$ is equal to $1/b^n$ multiplied by the number of solutions to
\[ X_1'+\cdots+X_n' + Y = jb + (n+1)(b-1)/2 - i, \] where $0 \leq Y \leq b-1$. This is equal to $1/b^n$
multiplied by the coefficient of $x^{jb+(n+1)(b-1)/2-i}$ in $(1+x+\cdots+x^{b-1})^{n+1}$, proving part 1.

For part 2, let $[x^a] f(x)$ denote the coefficient of $x^a$ in $f(x)$. Then by part 1,
\begin{eqnarray*}
K(i,j) & = & \frac{1}{b^n} [x^{jb+(n+1)(b-1)/2-i}] \left( \frac{1-x^b}{1-x} \right)^{n+1} \\
& = & \frac{1}{b^n} \sum_{l \geq 0} (-1)^l {n+1 \choose l} [x^{jb+(n+1)(b-1)/2-i-lb}] (1-x)^{-(n+1)} \\
& = & \frac{1}{b^n} \sum_{l=0}^{\lfloor j+ \frac{n+1}{b} \frac{b-1}{2} - \frac{i}{b} \rfloor}
(-1)^l {n+1 \choose l} {n+jb+(n+1) \frac{b-1}{2} - i - lb \choose n}.
\end{eqnarray*}
\end{proof}

{\it Remarks:}
\begin{enumerate}
\item The type B carries chain in the paper \cite{DF} has the same transition matrix as the balanced chain, when the number of numbers being added is even. More precisely the chain in the paper \cite{DF} has state space $\{0,1,\cdots,n\}$, and the balanced chain has state space $\{-n/2,\cdots,0,\cdots,n/2\}$. The chance that the type B chain in \cite{DF} (with $b$ replaced by $(b-1)/2$) goes from $i$ to $j$ is the same as the chance that the balanced chain goes from $i-n/2$ to $j-n/2$. This follows immediately by comparing the formula in part 1 of Theorem 4.2 of \cite{DF} with the formula in Theorem \ref{trans}.

\item Consider the balanced carries chain when adding an odd number $n$ of numbers. This is a Markov chain on the set
$\{-(n-1)/2, \cdots, (n-1)/2\}$, with transition probabilities given by Theorem \ref{trans} above. This is the same as Holte's
carries chain \cite{Ho} on $\{0,1,\cdots,n-1\}$. More precisely, for all $0 \leq i,j \leq n-1$, the chance that Holte's chain
moves from $i$ to $j$ is equal to the chance  that the balanced chain moves from $i-(n-1)/2$ to $j-(n-1)/2$. This follows by
comparing the formula in Holte's paper with the formula in Theorem \ref{trans} above.

\item Letting $K_b$ denote the base $b$ balanced carries transition matrix, one has that $K_a K_b = K_{ab}$. This follows from the fact (proved in Sections \ref{lefteig} and \ref{righteig}) that the eigenvalues of $K_b$ are $1,1/b,\cdots,1/b^n$, and that the eigenvectors are independent of $b$.

\item While it is not emphasized here, the papers \cite{DF}, \cite{DF2} develop a card shuffling interpretation of the transition matrix and a host of
card shuffling interpretations of the spectral properties developed here.

\item Balanced arithmetic can be developed for even bases. For example, when $b=10$ choose digits $0,\pm 1, \pm 2, \pm3, \pm 4, 5$ (or replace $5$ by $-5$). The results
seem similar but we have not fully worked out the details.

\end{enumerate}

\section{Left eigenvectors of the amazing matrix for balanced carries} \label{lefteig}

This section studies the left eigenvectors of the balanced carries matrix when an even number $n$ of numbers are added.
The eigenvalues turn out to be $1,1/b,1/b^2, \cdots, 1/b^n$, and we derive an explicit formula for the left eigenvectors,
identifying them with hyperoctahedral Foulkes characters. In particular, these eigenvectors turn out to be independent of $b$.
The left and right eigenvectors have myriad uses for quantifying rates of convergence and the behavior of features of the
carries process. These are detailed in Section 2 of \cite{DPR}.

\begin{theorem} \label{left} Let $K$ be the transition matrix of the balanced carries chain of Section \ref{amazing}. The $jth$ left
eigenvector of $K$ (where $0 \leq j \leq n$), corresponding to the eigenvalue $1/b^j$, evaluated at the
state $i$ (where $-n/2 \leq i \leq n/2$), is given by
\[ v_j^n[i] = \sum_{r=0}^{i+n/2} (-1)^r {n+1 \choose r} (n+2i-2r+1)^{n-j} .\]
\end{theorem}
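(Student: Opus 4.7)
The plan is to turn the eigenvector equation $\sum_i v_j^n[i]\, K(i,k) = b^{-j}\, v_j^n[k]$ into a single formal power series identity by encoding $K$ via Theorem~\ref{trans}(1), namely $K(i,k) = b^{-n}[y^{M-i}]\, P(y)^{n+1}$ with $P(y) = 1 + y + \cdots + y^{b-1}$ and $M = kb + (n+1)(b-1)/2$. The hard part, I anticipate, will be finding the right factorization of the generating function of $\{v_j^n[i]\}$ so that it telescopes cleanly against $P(y)^{n+1}$; once that is secured, everything collapses into a short coefficient extraction plus one finite-difference vanishing.

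First I would shift states by $n/2$: let $i' = i + n/2$, $k' = k + n/2$, and $u_j^n(i') := v_j^n[i' - n/2] = \sum_{r=0}^{i'}(-1)^r\binom{n+1}{r}(2(i'-r)+1)^{n-j}$. A short calculation gives $M - i = N_k - i'$ with $N_k := k'b + (b-1)/2$, so that, writing $U_j^n(y) := \sum_{i'=0}^n u_j^n(i')\, y^{i'}$, the eigenvector equation reduces to
\[ [y^{N_k}]\, U_j^n(y)\, P(y)^{n+1} \;=\; b^{n-j}\, u_j^n(k'), \qquad k' \in \{0,1,\ldots,n\}. \]

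Next I would establish the factorization
\[ U_j^n(y) \;=\; (1-y)^{n+1}\, G(y), \qquad G(y) := \sum_{m \ge 0}(2m+1)^{n-j}\, y^m. \]
The $y^m$ coefficient of $(1-y)^{n+1}\,G(y)$ is $\sum_{r=0}^{\min(m,n+1)}(-1)^r\binom{n+1}{r}(2(m-r)+1)^{n-j}$; for $m \le n$ this equals $u_j^n(m)$ by definition, and for $m \ge n+1$ the unrestricted alternating sum over $r \in \{0,\ldots,n+1\}$ of a polynomial of degree $n-j \le n$ in $r$ vanishes (it is an $(n+1)$-st finite difference). This simultaneously gives $\deg U_j^n \le n$ and confirms that $v_j^n$ is supported on $\{-n/2,\ldots,n/2\}$.

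With the factorization in hand, the rest is automatic. Using $P(y)^{n+1} = (1-y^b)^{n+1}/(1-y)^{n+1}$ the $(1-y)^{n+1}$ cancels, leaving $U_j^n(y)\, P(y)^{n+1} = G(y)(1-y^b)^{n+1}$. Expanding $(1-y^b)^{n+1} = \sum_l (-1)^l\binom{n+1}{l} y^{lb}$ and noting that $N_k - lb = (k'-l)b + (b-1)/2$, the coefficient $[y^{N_k - lb}]\, G(y)$ equals $b^{n-j}(2(k'-l)+1)^{n-j}$ for $l \le k'$ and vanishes otherwise. Hence $[y^{N_k}]\, U_j^n(y)\, P(y)^{n+1} = b^{n-j}\, u_j^n(k')$, and dividing by $b^n$ yields the eigenvalue $b^{-j}$. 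The spectral claim is then complete: each $v_j^n$ is nonzero (for instance $v_n^n[n/2] = \sum_{r=0}^n (-1)^r\binom{n+1}{r} = (-1)^n$), so the $n+1$ eigenvectors with distinct eigenvalues $1, b^{-1}, \ldots, b^{-n}$ are independent and exhaust the spectrum. The main obstacle, as flagged above, is recognizing that the shift $i \mapsto i + n/2$ lines up $M - i$ with the progression $\{\ell b + (b-1)/2\}_{\ell \in \mathbb{Z}}$, which is precisely what allows $(1-y^b)^{n+1}$ to act as a clean multiplicative filter on $G(y)$ and produce the factor $b^{n-j}$ that becomes $b^{-j}$ after normalization.
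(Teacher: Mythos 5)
Your proof is correct and is essentially the paper's own argument: both recognize the generating function of $v_j^n$ as $(1-y)^{n+1}$ (the paper's $(1-x^2)^{n+1}$ after exponent doubling) times the power-sum series $\sum_m (2m+1)^{n-j}y^m$, cancel that factor against the denominator of $(1+y+\cdots+y^{b-1})^{n+1}$, and extract the factor $b^{n-j}$ from coefficients along the progression $\ell b + (b-1)/2$. The only differences are cosmetic bookkeeping --- you shift states by $n/2$ where the paper doubles exponents, and you justify the degree bound by an $(n+1)$-st finite difference where the paper factors out the Eulerian polynomial.
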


For example, when $n=2$, the matrix whose rows are the left eigenvectors of $K$ with eigenvalue $1/b^j$ (with $0 \leq j \leq 2$)
is given by

\begin{equation*}
\begin{pmatrix}
1 & 6 & 1 \\
1 & 0 & -1 \\
1 & -2 & 1
\end{pmatrix}.
\end{equation*}

When $n=4$, the matrix of left eigenvectors is

\begin{equation*}
\begin{pmatrix}
1 & 76 & 230 & 76 & 1 \\
1 & 22 & 0 & -22 & -1 \\
1 & 4 & -10 & 4 & 1 \\
1 & -2 & 0 & 2 & -1 \\
1 & -4 & 6 & -4 & 1
\end{pmatrix}.
\end{equation*}

The left eigenvector corresponding to the eigenvalue 1 is proportional to the stationary distribution of the balanced carries chain. This has an interpretation in terms of descents of signed permutations, analogous to Holte's interpretation of the stationary of the usual carries chain in terms of descents in ordinary permutations. To describe this, we use the linear ordering
\[ 1 < 2 < \cdots < n < -n < \cdots < -2 < -1.\] We say that
\begin{enumerate}
\item $\sigma$ has a descent at position $i$ ($1 \leq i \leq n-1$) if $\sigma(i)>\sigma(i+1)$.
\item $\sigma$ has a descent at position $n$ if $\sigma(n)<0$.
\end{enumerate}  For example, $-1 \ -2 \ -3 $ has three descents.

Let $\overline{A(n,k)}$ denote the number of signed permutations on $n$ symbols with $k$ descents. From Corollary 4.6 of \cite{DF}, one has that \[ \overline{A(n,k)} = \sum_{r=0}^k (-1)^r {n+1 \choose r} (2k-2r+1)^n .\] Hence Theorem \ref{left} implies that $v_0^n[i]$ is equal to the number of signed permutations with $i+n/2$ descents. For example when $n=4$, the entries of the left eigenvector $1 \ 76 \ 230 \ 76 \ 1$ are the number of signed permutations on $4$ symbols with $0,1,2,3,4$ descents respectively.

Next we proceed to the proof of Theorem \ref{left}.

\begin{proof} First note that $\sum_{r=0}^{i+n/2} (-1)^r {n+1 \choose r} (n+2i-2r+1)^{n-j}$ is the coefficient of $x^{2i+n+1}$ in
\[ \sum_{r \geq 0} (-1)^r {n+1 \choose r} x^{2r} \cdot \sum_{k \geq 0} k^{n-j} x^k = (1-x^2)^{n+1} \sum_{k \geq 0} k^{n-j} x^k.\]  Using the well-known fact (easily proved by induction) that \begin{equation} \label{1} \sum_{k \geq 0} k^n x^k = \left( x \frac{d}{dx} \right)^n (1-x)^{-1}, \end{equation} it follows that $v_j^n[i]$ is the coefficient of $x^{2i+n+1}$ in
\[ (1-x^2)^{n+1} \left( x \frac{d}{dx} \right)^{n-j} (1-x)^{-1}.\] Note that \[ (1-x^2)^{n+1} \left( x \frac{d}{dx} \right)^{n-j} (1-x)^{-1} \]
is a polynomial of degree $2n+1$ (being equal to $(1+x)^{n+1} (1-x)^j$ multiplied by the Eulerian polynomial $(1-x)^{n-j+1} \left( x \frac{d}{dx} \right)^{n-j} (1-x)^{-1}$ of degree $n-j$).

Thus, using the notation for $[x^a] f(x)$ as the coefficient of $x^a$ in a power series $f(x)$, it follows that
\begin{eqnarray*}
& & \sum_{i=-n/2}^{n/2} K(i,k) \cdot v^n_j[i] \\
& = & \sum_{i=-\infty}^{\infty} K(i,k) \cdot v^n_j[i] \\
& = & \frac{1}{b^n} \sum_{i=-\infty}^{\infty} \sum_{l \geq 0} (-1)^l {n+1 \choose l} [x^{kb+(n+1)(b-1)/2-i-lb}] (1-x)^{-(n+1)} \cdot v_j^n[i] \\
& = & \frac{1}{b^n} \sum_{i=-\infty}^{\infty} [x^{2i+n+1}] (1-x^2)^{n+1} (x \frac{d}{dx})^{n-j} (1-x)^{-1} \\
& & \cdot \sum_{l \geq 0} (-1)^l {n+1 \choose l} [x^{kb+(n+1)(b-1)/2-i-lb}] (1-x)^{-(n+1)} \\
& = & \frac{1}{b^n} \sum_{i=-\infty}^{\infty} [x^{2i+n+1}] (1-x^2)^{n+1} (x \frac{d}{dx})^{n-j} (1-x)^{-1} \\
& & \cdot \sum_{l \geq 0} (-1)^l {n+1 \choose l} [x^{2kb+(n+1)(b-1)-2i-2lb}] (1-x^2)^{-(n+1)} \\
& = & \frac{1}{b^n} \sum_{l \geq 0} (-1)^l {n+1 \choose l} [x^{2kb+(n+1)b-2lb}] (x \frac{d}{dx})^{n-j} (1-x)^{-1} \\
& = & \frac{1}{b^n} \sum_{l=0}^{k+n/2} (-1)^l {n+1 \choose l} (2kb+(n+1)b-2lb)^{n-j} \\
& = & \frac{1}{b^j} \sum_{l=0}^{k+n/2} (-1)^l {n+1 \choose l} (2k+(n+1)-2l)^{n-j} \\
& = & \frac{1}{b^j} v_j^n[k], \end{eqnarray*} as needed. Note that the sixth equality used \eqref{1}.
\end{proof}

Proposition \ref{recur} gives a recursive formula for the left eigenvectors of the balanced carries chain, which is very similar to that of the type $A$ Foulkes characters on page 306 of \cite{K}. For $0 \leq i,j \leq n$, define
\[ w^n_j[i] = \sum_{r=0}^i (-1)^r {n+1 \choose r} (2i-2r+1)^{n-j} .\] Note that for $n$ even, $w^n_j[i]=v^n_j[i-n/2]$.

\begin{prop} \label{recur} With notation as above,
\[ w^n_j[i] = w^{n-1}_{j-1}[i] - w^{n-1}_{j-1}[i-1] \] for all $1 \leq i,j \leq n$. Moreover there are the boundary conditions $w^n_0[i]= \overline{A(n,i)}$ and $w^n_j[n]=(-1)^j$.
\end{prop}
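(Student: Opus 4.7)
The plan is to verify the three claims separately, all by direct manipulation of the defining sum, with the recurrence handled by Pascal's rule and the boundary value at $i=n$ handled by a finite-difference argument.

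First I would tackle the recurrence. Writing out $w^{n-1}_{j-1}[i]$ and $w^{n-1}_{j-1}[i-1]$ from the definition and shifting the summation index by $r \mapsto r-1$ in the second sum, the difference becomes
\[ \sum_{r=0}^{i} (-1)^r \left[ \binom{n}{r} + \binom{n}{r-1} \right] (2i-2r+1)^{n-j}, \]
with the convention $\binom{n}{-1}=0$. Applying Pascal's rule $\binom{n}{r}+\binom{n}{r-1}=\binom{n+1}{r}$ collapses this to the defining sum for $w^n_j[i]$. This step is routine; the only care needed is matching the endpoints of the two sums, which works out because the $r=0$ term in the first sum has $\binom{n}{-1}=0$ and the $r=i$ term from the shifted second sum is supplied by the first sum.

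Next, the identity $w^n_0[i]=\overline{A(n,i)}$ is immediate: it is literally the formula for $\overline{A(n,i)}$ quoted from Corollary 4.6 of \cite{DF} in the paragraph preceding Proposition \ref{recur}, with $j=0$ so that the exponent is $n-0=n$.

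The remaining task, and the only non-cosmetic one, is to show $w^n_j[n]=(-1)^j$. The idea is to pad the sum to length $n+2$ by adding the artificial $r=n+1$ term. That extra term equals $(-1)^{n+1}\binom{n+1}{n+1}(-1)^{n-j} = -(-1)^j$. Once included, the full sum $\sum_{r=0}^{n+1}(-1)^r\binom{n+1}{r}(2n-2r+1)^{n-j}$ is, up to sign, the $(n{+}1)$-st forward difference of the polynomial $r \mapsto (2n-2r+1)^{n-j}$ evaluated at $0$. Since that polynomial has degree $n-j \leq n < n+1$, its $(n{+}1)$-st difference vanishes, so the padded sum is zero. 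Subtracting the artificial $r=n+1$ term then gives $w^n_j[n] = 0 - (-(-1)^j) = (-1)^j$, as required. The only subtlety to watch is the parity bookkeeping $(-1)^{n+1}(-1)^{n-j} = -(-1)^j$; beyond that the argument is mechanical, which is why I expect the finite-difference observation to be the main (but minor) conceptual hurdle in the proof.
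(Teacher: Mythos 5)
Your proof is correct, but it follows a genuinely different route from the paper's. The paper derives all three claims from the generating-function representation established in the proof of Theorem \ref{left}, namely that $w^n_j[i]$ is the coefficient of $x^{2i+1}$ in $(1-x^2)^{n+1}\left(x\frac{d}{dx}\right)^{n-j}(1-x)^{-1}$: the recurrence is then just the effect of the extra factor $(1-x^2)$ relative to the $(n-1,j-1)$ case, and the value $w^n_j[n]=(-1)^j$ is read off as the top-degree coefficient of the degree-$(2n+1)$ polynomial $(1+x)^{n+1}(1-x)^j$ times the monic Eulerian polynomial of degree $n-j$. You instead work directly with the defining sum: Pascal's rule $\binom{n}{r}+\binom{n}{r-1}=\binom{n+1}{r}$ gives the recurrence (your index bookkeeping is right, since $(n-1)-(j-1)=n-j$ keeps the exponents aligned), and padding the sum at $i=n$ with the $r=n+1$ term turns it into the $(n+1)$-st finite difference of a polynomial of degree $n-j\leq n$, which vanishes; your sign computation $(-1)^{n+1}(-1)^{n-j}=-(-1)^j$ is also correct. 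The trade-off: your argument is elementary and self-contained, requiring nothing from Theorem \ref{left}, while the paper's is shorter given that the generating-function machinery is already on the table and makes the structural reason for the recurrence (one extra factor of $1-x^2$) transparent.
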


\begin{proof} The recurrence $w^n_j[i] = w^{n-1}_{j-1}[i] - w^{n-1}_{j-1}[i-1]$ follows from the fact in the proof of Theorem \ref{left} that $w_j^n[i]$ is the coefficient of $x^{2i+1}$ in \[ (1-x^2)^{n+1} \left( x \frac{d}{dx} \right)^{n-j} (1-x)^{-1}.\]
The equation $w^n_0[i]=\overline{A(n,i)}$ is clear from the formula for $A(n,i)$ just preceding the proof of Theorem \ref{left}. To see that $w^n_j[n]=(-1)^j$, note from the proof of Theorem \ref{left} that $w^n_j[n]$ is the coefficient of $x^{2n+1}$ in the product of $(1+x)^{n+1} (1-x)^j$ with the $n-j$th Eulerian polynomial. Since the $n-j$th Eulerian polynomial is monic of degree $n-j$, it follows that $w_j^n[n] = (-1)^j$.
\end{proof}

{\it Remark:} Comparing Theorem 5 of Miller's paper \cite{Mi} (in the case r=2 of the hyperoctahedral group) with Theorem \ref{left} shows that our left eigenvectors for the balanced carries chain are indeed equal to Miller's hyperoctahedral Foulkes characters. The recurrence in our Proposition \ref{recur} is equivalent to the recurrence in his Theorem 7, though the proof is completely different.

\[ \]

There is another derivation of the stationary distribution (left eigenvector corresponding to eigenvalue $1$) of the balanced carries chain, when an even number of numbers is added.

\begin{theorem} \label{stat} For $0 \leq j \leq n$, with $n$ fixed, \[ lim_{r \rightarrow \infty} K^r(0,j-n/2) = \frac{ \overline{A(n,j)} }{2^n n!} .\] \end{theorem}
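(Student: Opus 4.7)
The plan is to derive this as a straightforward convergence-to-stationarity statement, using the eigenstructure of $K$ supplied by Theorem \ref{left}. Since the matrix $K$ acts on the $(n+1)$-dimensional space of functions on $\{-n/2,\ldots,n/2\}$ and, by Theorem \ref{left}, has the $n+1$ distinct eigenvalues $1,1/b,1/b^2,\ldots,1/b^n$, it is diagonalizable, and all eigenvalues other than $1$ have modulus at most $1/b<1$. Standard linear algebra (spectral decomposition) then gives that $K^r$ converges, as $r\to\infty$, to the rank-one matrix $\mathbf{1}\pi$, where $\mathbf{1}$ is the column of ones and $\pi$ is the unique probability row-vector satisfying $\pi K=\pi$. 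In particular, for every starting state $i$, $K^r(i,\,\cdot\,)\to\pi$, so it suffices to identify $\pi$.

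Next I would identify $\pi$ with the left eigenvector $v_0^n$ of Theorem \ref{left}, suitably normalized. Taking $j=0$ in Theorem \ref{left} yields
\[
v_0^n[i]=\sum_{r=0}^{i+n/2}(-1)^r\binom{n+1}{r}(n+2i-2r+1)^n,
\]
and the formula for $\overline{A(n,k)}$ recalled immediately after Theorem \ref{left} (namely Corollary 4.6 of \cite{DF}) shows $v_0^n[i]=\overline{A(n,i+n/2)}$. Since all entries $\overline{A(n,k)}$ are nonnegative, $\pi$ is obtained by dividing by $\sum_{k=0}^n\overline{A(n,k)}$.

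The only remaining step is the normalization identity $\sum_{k=0}^n\overline{A(n,k)}=2^n n!$. Because $\overline{A(n,k)}$ counts signed permutations of $\{1,\ldots,n\}$ with exactly $k$ descents (in the order $1<2<\cdots<n<-n<\cdots<-1$, with the extra descent at position $n$ when $\sigma(n)<0$), summing over $k$ gives the total number of signed permutations, which is exactly $|B_n|=2^n n!$. Combining the three ingredients yields
\[
\lim_{r\to\infty}K^r(0,j-n/2)=\pi(j-n/2)=\frac{\overline{A(n,j)}}{2^n n!},
\]
as claimed.

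The main (very mild) obstacle is the justification of convergence of $K^r$: one must invoke that the eigenvalues are distinct, hence $K$ is diagonalizable, so contributions from eigenvalues of modulus $\leq 1/b$ decay geometrically in operator norm. Everything else is bookkeeping using Theorem \ref{left} and the combinatorial interpretation of the $\overline{A(n,k)}$.
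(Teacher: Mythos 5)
Your proof is correct, but it takes a genuinely different route from the paper's. You argue spectrally: Theorem \ref{left} gives $n+1$ distinct eigenvalues $1,1/b,\dots,1/b^n$, hence $K$ is diagonalizable, $K^r$ converges to the rank-one projection $\mathbf{1}\pi$, and $\pi$ is the top left eigenvector $v_0^n$ normalized by $\sum_k \overline{A(n,k)}=|B_n|=2^n n!$; each of these steps checks out (in particular $v_0^n[i]=\overline{A(n,i+n/2)}$ follows by substituting $k=i+n/2$ into the displayed formula for $\overline{A(n,k)}$, and simplicity of the eigenvalue $1$ gives uniqueness of $\pi$). The paper, by contrast, deliberately avoids Theorem \ref{left} here: it presents Theorem \ref{stat} as ``another derivation'' of the stationary distribution, using the representation from the proof of Theorem \ref{trans} of $K^r(0,j-n/2)$ as the probability that a sum of $n$ discrete uniforms on $\{0,\dots,b^r-1\}$ lands in an explicit interval, then approximating by continuous uniforms, invoking Slutsky's theorem, and finishing with Lemma \ref{lem} (the probability that $U_1+\cdots+U_n$ lies in $[j-\tfrac12,j+\tfrac12]$ equals $\overline{A(n,j)}/2^n n!$). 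Your argument is shorter and is essentially the ``first derivation'' the paper already sketches in the discussion following Theorem \ref{left} (where $v_0^n[i]$ is identified with $\overline{A(n,i+n/2)}$), and it yields convergence from every starting state, not just $0$. The paper's probabilistic route buys independence from the eigenvector computation and, more importantly, a concrete representation of $K^r(0,\cdot)$ in terms of sums of uniforms that is reused in the remark immediately afterward to discuss quantitative total variation convergence rates; your spectral argument does not produce that representation.
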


To begin, recall the following lemma from \cite{DF}.

\begin{lemma} \label{lem} Let $U_1,\cdots,U_n$ be independent, identically distributed continuous uniform random variables on $[0,1]$. Then \[ P \left( j - \frac{1}{2} \leq U_1 + \cdots + U_n \leq j + \frac{1}{2} \right) = \frac{ \overline{A(n,j)} }{2^n n!} .\]
\end{lemma}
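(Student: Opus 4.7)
The plan is to interpret $K^r(0,j-n/2)$ probabilistically and then pass to a continuous limit. Running the Markov chain for $r$ steps starting at $0$ is the same as drawing $n$ independent integers $a_1,\ldots,a_n$, where $a_k = \sum_{i=0}^{r-1}X_{k,i}b^i$ and the digits $X_{k,i}$ are i.i.d.\ uniform on $\{-(b-1)/2,\ldots,(b-1)/2\}$, and reading off the carry $\kappa_{r+1}$ emerging from the $r$-th column. By the balanced addition algorithm, $\kappa_{r+1}$ is the unique integer such that $a_1+\cdots+a_n-\kappa_{r+1}b^r$ lies in $\{-(b^r-1)/2,\ldots,(b^r-1)/2\}$; equivalently, $\kappa_{r+1}$ is the nearest integer to $(a_1+\cdots+a_n)/b^r$.

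Because balanced base-$b$ digits give bijective representations of the integers in $\{-(b^r-1)/2,\ldots,(b^r-1)/2\}$, each $a_k$ is uniform on this set, so $a_k/b^r$ is uniform on $b^r$ equally spaced points inside $(-1/2,1/2)$ and converges in distribution to $V_k$, uniform on $[-1/2,1/2]$. The $V_k$ are independent, hence $(a_1+\cdots+a_n)/b^r$ converges weakly to $V_1+\cdots+V_n$, whose law is a shifted Irwin--Hall distribution with a continuous density (as $n\geq 2$). Rewriting the characterization of $\kappa_{r+1}$ gives
\[ K^r(0,j-n/2) = P\bigl( (a_1+\cdots+a_n)/b^r \in \bigl[ j-n/2-\tfrac12+\tfrac{1}{2b^r},\, j-n/2+\tfrac12-\tfrac{1}{2b^r}\bigr]\bigr), \]
which, by continuity of the limiting density at the endpoints $\pm 1/2$, converges as $r\to\infty$ to $P\bigl(V_1+\cdots+V_n \in [j-n/2-1/2,\, j-n/2+1/2]\bigr)$.

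Finally, the substitution $U_k := V_k+1/2$ turns the $V_k$ into independent uniforms on $[0,1]$ and converts the interval into $[j-1/2,\, j+1/2]$; Lemma~\ref{lem} then identifies the resulting probability as $\overline{A(n,j)}/(2^n n!)$, yielding the claimed limit. The only delicate step is the passage from the discrete approximating laws to the continuous limit, and this is not a real obstacle: the Irwin--Hall density is continuous and the approximating measures are uniform on arithmetic progressions becoming dense in $[-n/2,n/2]$, so probabilities of fixed intervals converge by standard weak-convergence arguments.
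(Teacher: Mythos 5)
There is a basic mismatch here: what you have written is not a proof of Lemma \ref{lem} at all, but a proof of Theorem \ref{stat} that \emph{invokes} Lemma \ref{lem} in its final sentence. Your argument interprets $K^r(0,j-n/2)$, passes to the Irwin--Hall limit, and then concludes that ``Lemma \ref{lem} then identifies the resulting probability as $\overline{A(n,j)}/(2^n n!)$, yielding the claimed limit.'' As a proof of the lemma this is circular. What it actually establishes is $\lim_{r\to\infty}K^r(0,j-n/2) = P\left( j-\frac{1}{2} \leq U_1+\cdots+U_n \leq j+\frac{1}{2}\right)$, which is Theorem \ref{stat} \emph{modulo} the lemma; and your route there (replace the discrete uniform digits by continuous uniforms, rescale, control the $O(1/b^r)$ endpoint corrections, and use continuity of the limiting density) is essentially the paper's own proof of Theorem \ref{stat}, phrased in terms of nearest integers rather than the inequalities from the proof of Theorem \ref{trans}.

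The actual content of Lemma \ref{lem} --- the identity between the Irwin--Hall probability $P(j-\frac{1}{2}\leq U_1+\cdots+U_n\leq j+\frac{1}{2})$ and the count $\overline{A(n,j)}$ of signed permutations with $j$ descents --- is nowhere addressed in your write-up. The paper does not reprove it either; it simply recalls the lemma from \cite{DF}. A self-contained argument would, for example, integrate the Irwin--Hall density $\frac{1}{(n-1)!}\sum_{r\geq 0}(-1)^r{n \choose r}\max(t-r,0)^{n-1}$ over $[j-\frac{1}{2},j+\frac{1}{2}]$ and match the result against the explicit formula $\overline{A(n,j)}=\sum_{r=0}^{j}(-1)^r{n+1 \choose r}(2j-2r+1)^n$ quoted before the proof of Theorem \ref{left}. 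Alternatively, your limit computation \emph{could} be turned into a legitimate proof of the lemma if you independently identified $\lim_r K^r(0,j-n/2)$ with the stationary probability $v_0^n[j-n/2]/2^n n!$ via Theorem \ref{left} and ergodicity of the chain --- but you would then need to supply that identification rather than cite the lemma itself. As it stands, the statement you were asked to prove remains unproved.
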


\begin{proof} (Of Theorem \ref{stat}) From the proof of Theorem \ref{trans}, $K^r(0,j-n/2)$ is equal to the probability that
\begin{eqnarray*} (j-n/2)b^r + \frac{(n-1)(b^r-1)}{2}
& \leq & X_1+\cdots+X_n \\
& \leq & (j-n/2)b^r + \frac{(n+1)(b^r-1)}{2},
\end{eqnarray*} where $X_1,\cdots,X_n$ are discrete uniforms on $\{ 0, \cdots, b^r-1 \}$. This is equal to the probability that
\begin{eqnarray*}
(j-n/2)b^r + \frac{(n-1)(b^r-1)}{2} & \leq & \sum_{i=1}^n Y_i - \sum_{i=1}^n (Y_i - \lfloor Y_i \rfloor) \\
& \leq & (j-n/2)b^r + \frac{(n+1)(b^r-1)}{2},
\end{eqnarray*} where $Y_1,\cdots, Y_n$ are continuous iid uniforms on $[0,b^r]$. Letting $U_i=Y_i/b^r$ be iid uniforms on $[0,1]$, it follows that $K^r(0,j-n/2)$ is equal to the probability that
\[ j-1/2-(n-1)/(2b^r) \leq \sum_{i=1}^n U_i - E \leq j+1/2-(n+1)/(2b^r), \] where $E= \sum_{i=1}^n (Y_i - \lfloor Y_i \rfloor) /b^r$. Since $E$, $(n-1)/(2b^r)$, and $(n+1)/(2b^r)$ all tend to 0 with probability 1 as $r \rightarrow \infty$ and $n,b$ are fixed, it follows from Slutsky's theorem that
\[ lim_{r \rightarrow \infty} K^r(0, j-n/2) = P \left( j-1/2 \leq \sum_{i=1}^n U_i \leq j+1/2 \right). \] Applying Lemma \ref{lem} finishes the proof of the theorem. \end{proof}

{\it Remark:} There is a representation of the transition probabilities of the balanced carries chain which might be useful for bounding the total variation convergence rate of the balanced carries chain to its stationary distribution $\pi$ (arguing as in Theorem 3.4 of \cite{DF}). Indeed, Theorem \ref{stat} and Lemma \ref{lem} give that for $0 \leq j \leq n$,
\[ \pi(j-n/2) = P \left( \lfloor U_1 + \cdots + U_n +1/2 \rfloor = j \right) .\] We want to quantify the convergence of $K^r(0,j-n/2)$ to $\pi(j)$ as $r \rightarrow \infty$. By the proof of Theorem \ref{trans}, it follows that
$K^r(0,j-n/2)$ is equal to \[ P \left( j - (n-1)/(2b^r) \leq \frac{X_1}{b^r} + \cdots + \frac{X_n}{b^r} + 1/2 \leq j +1  - (n+1)/(2b^r) \right), \] where $X_1,\cdots,X_n$ are discrete uniforms on $\{0,1,\cdots,b^r-1 \}$. For $r$ large enough with respect to $n$, this ${\it almost}$ says that \[ K^r(0,j-n/2) = P \left( \lfloor \frac{1}{b^r} \sum_{i=1}^n X_i + 1/2 \rfloor =j \right).\]

\section{Right eigenvectors of the amazing matrix for balanced carries} \label{righteig}

This section describes the right eigenvectors of the balanced carries matrix $K$. As usual, we assume that the base $b$ is odd, and that an even number of numbers are being added.

\begin{theorem} \label{right} Let $K$ be the transition matrix of the balanced carries chain of Section \ref{amazing}. The $j$th right eigenvector of $K$ (where $0 \leq j \leq n$) corresponding to the eigenvalue $1/b^j$, evaluated at the state $i$ (where $-n/2 \leq i \leq n/2$), is given by
\[ u_j^n[i] = [x^{n-j}] (x-n-2i+1)(x-n-2i+3) \cdots (x-n-2i+2n-1), \] where $[x^a] f(x)$ denotes the coefficient of $x^a$ in $f(x)$.
\end{theorem}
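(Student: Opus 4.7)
The plan is to reduce the eigenvalue equation $\sum_k K(i,k)u_j^n[k] = b^{-j}u_j^n[i]$ to a polynomial identity that is then verified directly, in the spirit of the proof of Theorem~\ref{left}. Setting $Q(z) := \prod_{\ell=0}^{n-1}(z + 2\ell - n + 1)$ (so that $u_j^n[k] = [y^{n-j}]Q(y-2k)$), I would pull the coefficient extraction $[y^{n-j}]$ outside the sum and aim for the stronger identity, call it $(\ast)$:
\[ \sum_k K(i,k)\,Q(y - 2k) \;=\; b^{-n}\,Q(by - 2i). \]
This implies the theorem: factoring $b$ out of each linear factor of $Q(by-2i) = \prod_\ell b\bigl(y + (a_\ell - 2i)/b\bigr)$ gives $[y^{n-j}]Q(by-2i) = b^{n-j}\,u_j^n[i]$, so extracting $[y^{n-j}]$ from both sides of $(\ast)$ yields $\sum_k K(i,k)u_j^n[k] = b^{-n}\cdot b^{n-j}u_j^n[i] = b^{-j}u_j^n[i]$.

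To establish $(\ast)$, I would pass to the symmetric $(n{+}1)$-digit description of the kernel afforded by Theorem~\ref{trans}(1), namely $K(i,k) = b\,\Pr[\tilde X_0 + \cdots + \tilde X_n = kb - i]$ with $\tilde X_\ell$ iid uniform on $\{-(b-1)/2,\dots,(b-1)/2\}$. Substituting and writing $v := by - 2i$, the LHS of $(\ast)$ becomes a sum over $\tilde X$-configurations satisfying $b \mid i + \sum_\ell \tilde X_\ell$. I would remove this divisibility constraint via a roots-of-unity filter: since $\mathbb{E}[\zeta^{\tilde X_\ell}] = 0$ for every nontrivial $b$-th root of unity $\zeta$, one checks (by Leibniz-expanding $(\zeta d/d\zeta)^r(\mathbb{E}[\zeta^{\tilde X}])^{n+1}$ and noting that all $n{+}1$ factors must be differentiated to avoid a vanishing $\mathbb{E}[\zeta^{\tilde X}]$) that $\mathbb{E}[\zeta^{\sum \tilde X_\ell}\,p(\sum \tilde X_\ell)] = 0$ for every polynomial $p$ of degree $\le n$. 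The polynomial appearing on the LHS has degree exactly $n$ in $\sum_\ell \tilde X_\ell$, so the contributions from nontrivial characters drop out. Using $T \stackrel{d}{=} -T$, identity $(\ast)$ reduces to the polynomial identity $(\ast\ast)$:
\[ \mathbb{E}\Bigl[\prod_{\ell=0}^{n-1}(v + T + b a_\ell)\Bigr] \;=\; \prod_{\ell=0}^{n-1}(v + a_\ell), \qquad a_\ell = 2\ell - n + 1, \]
where $T$ is now the sum of $n{+}1$ iid uniforms on $A := \{-(b-1),-(b-3),\dots,b-1\}$.

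Both sides of $(\ast\ast)$ are monic polynomials in $v$ of degree $n$, and since $n$ is even the root set $\{-a_\ell\}$ of the RHS equals $\{a_\ell\}$ itself. It therefore suffices to show that the LHS vanishes at $v = a_m$ for each $m \in \{0,\dots,n-1\}$. Pairing $\ell$ with $n-1-m$ produces one factor $T + (1-b)a_m$, so the vanishing $\mathbb{E}[(T + (1-b)a_m)\prod_{\ell\ne n-1-m}(a_m + T + b a_\ell)] = 0$ can be verified from the even moments of $T$ (extracted from $\mathbb{E}[e^{sT}] = (\sinh(sb)/(b\sinh(s)))^{n+1}$) together with the symmetric-function identity $\prod_\ell(x - a_\ell) = \prod_{m=0}^{n/2-1}(x^2 - (2m+1)^2)$. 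I expect the main obstacle is this last combinatorial moment computation for general $n$; a cleaner alternative is to observe that the averaging operator $\mathcal{L}[f](v) := \mathbb{E}[f((v+T)/b)]$ is lower-triangular on polynomials with distinct eigenvalues $1, 1/b, \ldots, 1/b^n$, so its monic degree-$n$ eigenfunction with eigenvalue $b^{-n}$ is unique; $(\ast\ast)$ then amounts to showing $Q$ is this unique eigenfunction, verifiable by matching subleading coefficients (using $\mathbb{E}[T^2] = (n+1)(b^2-1)/3$ and $\sum_\ell a_\ell^2 = n(n-1)(n+1)/3$ at order $v^{n-2}$) and propagating inductively via the commutation $\partial_v \mathcal{L} = (1/b)\mathcal{L}\,\partial_v$, which forces each derivative $Q^{(m)}$ to be an eigenfunction of eigenvalue $b^{-(n-m)}$.
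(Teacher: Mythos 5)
Your reduction of the eigenvector equation to the polynomial identity $(\ast\ast)$ is correct and cleanly executed: the passage from Theorem~\ref{trans}(1) to $K(i,k)=b\Pr[\tilde S=kb-i]$ is right, the roots-of-unity filter legitimately kills the nontrivial characters because the integrand has degree exactly $n$ in $\tilde S$ while $\mathbb{E}[\zeta^{\tilde X}]=0$, and the bookkeeping showing that $(\ast)$ implies the theorem checks out (I verified $(\ast\ast)$ numerically for $n=2$ and for the $v^{n-2}$ and constant coefficients at $n=4$). This is a genuinely different route from the paper, which never touches $K$ again at this stage: there the authors prove $UV=2^n n!\cdot I$ directly, pairing the claimed right eigenvectors against the left eigenvectors of Theorem~\ref{left} and collapsing everything to the single binomial identity $\sum_{r\ge 0}(-1)^r\binom{n+1}{r}\binom{n+j-i-r}{n}=\delta_{i,j}$ cited from Holte; distinctness of the eigenvalues then forces $U$ to be the matrix of right eigenvectors.

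The gap is that $(\ast\ast)$ itself is never proved, and it is the entire content of the theorem in your formulation. Route 1 you concede is incomplete. Route 2 does not close it either: the commutation $\partial_v\mathcal{L}=(1/b)\mathcal{L}\partial_v$ does \emph{not} ``force each derivative $Q^{(m)}$ to be an eigenfunction'' --- that implication runs in the wrong direction. What the commutation actually gives, in a downward induction from $Q^{(n)}=n!$, is that $\mathcal{L}Q^{(m)}-b^{-(n-m)}Q^{(m)}$ is a \emph{constant} $c_m$; parity of $Q$ and the symmetry $T\stackrel{d}{=}-T$ kill $c_m$ for odd $m$ for free, but for each even $m$ you must separately verify
\begin{equation*}
\mathbb{E}\bigl[Q^{(m)}(T/b)\bigr]=b^{-(n-m)}Q^{(m)}(0),
\end{equation*}
which involves the moments of $T$ up to order $n-m$. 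Your explicit check using $\mathbb{E}[T^2]$ handles only $m=n-2$; the cases $m=n-4,\dots,0$ are exactly the ``combinatorial moment computation for general $n$'' you flagged as the obstacle in Route 1, so Route 2 reorganizes the difficulty rather than removing it. To finish along your lines, the cleanest fix is probably to note that $Q(v)=2^n n!\binom{(v+n-1)/2}{n}$ and evaluate $\mathbb{E}\bigl[\binom{((v+T)/b+n-1)/2}{n}\bigr]$ by a Vandermonde-type convolution --- which lands you essentially on the same Holte identity the paper invokes.
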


For example, when $n=2$, the matrix whose columns are the right eigenvectors of $K$ with eigenvalue $1/b^j$ (with $0 \leq j \leq 2$)
is given by

\begin{equation*}
\begin{pmatrix}
1 & 4 & 3 \\
1 & 0 & -1 \\
1 & -4 & 3
\end{pmatrix}.
\end{equation*}

When $n=4$, the matrix of right eigenvectors is

\begin{equation*}
\begin{pmatrix}
1 & 16 & 86 & 176 & 105 \\
1 & 8 & 14 & -8 & -15 \\
1 & 0 & -10 & 0 & 9 \\
1 & -8 & 14 & 8 & -15 \\
1 & -16 & 86 & -176 & 105
\end{pmatrix}.
\end{equation*}

{\it Remark:} The right eigenvectors allow describing the distribution of functionals of the carries chain $\{ \kappa_j \}$. For example,
the second eigenvector (corresponding to eigenvalue $1/b$) forming the second column of the matrices above, is $-2ni$. By scaling,
the function $i$ is an eigenvector. Translating into probability language, for $s$ less than $t$,
\[ E(\kappa_t|\kappa_s=m) = m/b^{t-s}.\] Similarly, the formulae of Theorem \ref{right} show that explicit polynomials of degree
$a$ in $\kappa$ are eigenvectors of the chain with eigenvalues $1/b^a$.

\begin{proof} Let $V$ denote the matrix of left eigenvectors of the balanced carries chain. Thus the $jth$ row of $V$ has $i$th entry
$v^n_j[i]$. Let $U$ denote the matrix whose columns are the right eigenvectors of the balanced carries chain; thus the $j$th column of $U$ has $i$th entry $u^n_j[i]$. We will prove that $U= 2^n n! \cdot V^{-1}$, which implies the theorem since by Theorem \ref{left}, the eigenvalues of the carries matrix are distinct.

Letting $u_{ik} = u_k^n[i]$ be the $(i,k)$ entry of $U$ (where $-n/2 \leq i \leq n/2$ and $0 \leq k \leq n$) and $v_{kj} = v_k^n[j]$ be the $(k,j)$ entry of $V$ (where $-n/2 \leq j \leq n/2$ and $0 \leq k \leq n$), one computes that
\begin{eqnarray*}
& & \sum_{k=0}^n u_{ik} v_{kj} \\
& = & \sum_{k=0}^n [x^{n-k}] (x-n-2i+1)(x-n-2i+3) \cdots (x-n-2i+2n-1) \\
& & \cdot v_{kj} \\
& = & \sum_{k=0}^n [x^{n-k}] (x-n-2i+1)(x-n-2i+3) \cdots (x-n-2i+2n-1)\\
& & \cdot \sum_{r=0}^{j+n/2} (-1)^r {n+1 \choose r} (n+2j-2r+1)^{n-k} \\
& = & \sum_{r=0}^{j+n/2} (-1)^r {n+1 \choose r} \sum_{k=0}^n (n+2j-2r+1)^{n-k} \\
& & \cdot [x^{n-k}] (x-n-2i+1)(x-n-2i+3) \cdots (x-n-2i+2n-1) \\
& = & \sum_{r=0}^{j+n/2} (-1)^r {n+1 \choose r} (2(j-i)-2r+2) \cdots (2(j-i)-2r+2n) \\
& = & 2^n \sum_{r=0}^{j+n/2} (-1)^r {n+1 \choose r} (n+(j-i)-r) \cdots (1+(j-i)-r) \\
& = & 2^n n! \sum_{r=0}^{j+n/2} (-1)^r {n+1 \choose r}{n+j-i-r \choose n} \\
& = & 2^n n! \sum_{r \geq 0} (-1)^r {n+1 \choose r}{n+j-i-r \choose n} \\
& = & 2^n n! \cdot \delta_{i,j},
\end{eqnarray*} where the final equality is explained on page 147 of \cite{Ho}. \end{proof}

{\it Remark}: The right eigenvectors are related to the type $B$ riffle shuffles studied in \cite{BB}. More precisely,
for $1 \leq k \leq n$, one has the following generating function:
\begin{eqnarray*}
& & \sum_{k=1}^n E_{n,k} x^k \\
& = & \frac{1}{2^n n!} \sum_{\pi \in B_n} (x-2d(\pi)+1)(x-2d(\pi)+3) \cdots (x-2d(\pi)+2n-1) \pi,
\end{eqnarray*}
where the $E_{n,k}$ are the Eulerian idempotents of the hyperoctahedral group $B_n$. Here $d(\pi)$ is what
\cite{BB} calls the number of descents of $\pi$ (the definition of descents in \cite{BB} is slightly different than the definition earlier
in this section). Thus
\begin{eqnarray*}
\sum_{k=1}^n E_{n,k} (x-n)^k & = & \frac{1}{2^n n!} \sum_{\pi \in B_n} (x-n-2d(\pi)+1)(x-n-2d(\pi)+3) \\
& & \cdots (x-n-2d(\pi)+2n-1) \pi.
\end{eqnarray*} Letting $E_{n,k}[i]$ denote the value of $E_{n,k}$ on a permutation with $i$ descents, it follows that
$u^n_j[i]$ is equal to $2^n n!$ multiplied by the coefficient of $x^{n-j}$ in $\sum_{k=1}^n E_{n,k}[i] (x-n)^k$.

\section{Balanced carries as a point process} \label{point}

 This section works with an odd base $b$ and digits $0,\pm 1,\pm 2,\dots,\pm (b-1)/2$. Then successive carries when adding a column of digits are $0,\pm b$. As such, independent uniformly chosen digits generate a stationary, one-dependent marked point process. Call this $X_1,X_2,\dots$. See \cite{borodin} for background on one-dependent processes.
\begin{example}
Working mod $5$, consider Table \ref{tab5}. The carries are shown in the central column as $X_1=-5$, $X_2=X_3=0$, $X_4=5,\dots$ (the carries are $\pm b$ in general). On the right are remainders $-2,1,2,\dots$. If the digits are independent and identically distributed in $\{0,\pm 1,\pm 2\}$, so are the remainders. If the remainders are $R_i$, there is a carry of $-5$ iff $R_i-R_{i+1}\leq -3$, a carry of $5$ iff $R_i-R_{i+1}\geq 3$, and a zero carry otherwise. Replace ``$3$'' by $(b+1)/2$ for general bases.
\begin{table}[htb]
\caption{Carries down a column for $b=5$ with signed digits. The right column shows the remainders. There is a $-$ or $+$ in the central column for a carry of $-b$ or $b$.}
\begin{small}\begin{equation*}\begin{array}{ccc}
\bar2&-&\bar2\\
\bar2&&1\\
1&&2\\
0&+&2\\
2&&\bar1\\
2&&1\\
\bar2&&\bar1\\
1&&0\\
1&&1\\
1&+&2\\
1&&\bar2\\\cline{1-1}
\multicolumn{2}{c}{3=1\bar2}&
\end{array}\end{equation*}\end{small}
\label{tab5}
\end{table}
\end{example}

From this description, it is easy to see that for any base, two successive $++$ or $--$ carries are impossible. For $b\geq 5$, all other patterns occur with positive probability. For $b=3$, $++$, $--$, $+0+$, and $-0-$ are impossible. The probability distribution of this balanced carries process can be expressed via determinantal formulae from \cite{borodin}. These authors determine the joint distribution for the process that records $1$ or $0$ as there is a carry or not (e.g., replace all $\pm b$ symbols by $1$). Let $a_i=P(i-1\text{ consecutive ones})$ with $a_1=1$. Theorem 4.1 in \cite{borodin} gives
\begin{theorem}
For a stationary binary one-dependent process with $a_i=P(X_1=X_2=\dots=X_{i-1}=1)$, $a_1=1$ and a binary string $t_1,t_2,\dots,t_{n-1}$ with $k$ zeros at positions $S=\{s_1<s_2<\dots<s_k\}\subseteq [n-1]$,
\begin{equation}\label{fir}
P(t_1,\dots,t_{n-1})=\det(a_{s_{j+1}-s_i})_{i,j=0}^k.
\end{equation}
The determinant is of a $(k+1)\times (k+1)$ matrix and $s_0=0$, $s_{k+1}=n$, $a_0=1$, and $a_i=0$ for $i < 0$.
\end{theorem}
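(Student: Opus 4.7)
The plan is to match two expansions term-by-term: I will expand the probability via inclusion--exclusion on the zero positions, use one-dependence to factor each term into a product of $a_\ell$'s, and recognize the resulting sum as the Leibniz expansion of the right-hand side determinant.

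First, set $A_i := \{X_{s_i}=0\}$ and $B := \bigcap_{j\in [n-1]\setminus S}\{X_j=1\}$. Writing $\mathbf{1}_{A_i} = 1 - \mathbf{1}_{\{X_{s_i}=1\}}$ and expanding $\prod_i \mathbf{1}_{A_i}$ yields
\[ P(t_1,\dots,t_{n-1}) \;=\; \sum_{T\subseteq [k]} (-1)^{|T|}\, P\bigl(X_j=1 \text{ for all } j \in U_T\bigr), \qquad U_T := ([n-1]\setminus S)\cup\{s_i : i\in T\}. \]
The complement of $U_T$ in $[n-1]$ is $\{s_i : i\in T^c\}$; these ``free'' indices separate $U_T$ into consecutive integer intervals. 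Writing $T^c=\{i_1<\cdots<i_m\}$ with sentinels $i_0:=0$, $i_{m+1}:=k+1$, the interval between $s_{i_j}$ and $s_{i_{j+1}}$ has length $s_{i_{j+1}}-s_{i_j}-1$. Successive intervals are separated by at least one free index, so one-dependence lets the joint probability factor, giving
\[ P(X_j=1 \text{ for all } j\in U_T) \;=\; \prod_{j=0}^{m} a_{s_{i_{j+1}}-s_{i_j}}. \]

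Next, I apply the Leibniz formula to $\det(c_{ij})_{i,j=0}^k$ where $c_{ij}=a_{s_{j+1}-s_i}$. Because $a_\ell=0$ for $\ell<0$, only permutations $\sigma$ of $\{0,1,\dots,k\}$ with $\sigma(i)\ge i-1$ contribute. A short orbit argument (start at $0$: if $\sigma(0)=b$, then $\sigma(b)=b-1,\sigma(b-1)=b-2,\dots,\sigma(1)=0$, closing the cycle on $\{0,\dots,b\}$; iterate on the complement) shows that such ``Hessenberg permutations'' correspond bijectively to compositions of $\{0,\dots,k\}$ into blocks of consecutive integers, with $\sigma$ acting on each block $[a,b]$ as the cycle $(a,\,b,\,b-1,\dots,\,a+1)$. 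On such a block the product $\prod_{i=a}^b c_{i,\sigma(i)}$ telescopes: the subdiagonal entries are $c_{i,i-1}=a_0=1$, leaving only the ``jump'' $c_{a,b}=a_{s_{b+1}-s_a}$; the cycle has length $b-a+1$, contributing sign $(-1)^{b-a}$. Indexing the blocks as $[i_j,\,i_{j+1}-1]$ via $T^c=\{i_1<\cdots<i_m\}$ as above, the total sign is $(-1)^{\sum_j (i_{j+1}-i_j-1)} = (-1)^{k-m} = (-1)^{|T|}$, so the Leibniz term for $T$ equals $(-1)^{|T|}\prod_{j=0}^m a_{s_{i_{j+1}}-s_{i_j}}$, matching the inclusion--exclusion summand.

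The main obstacle will be the correct application of one-dependence in the factorization step: the restricted intervals must be separated from one another by at least one \emph{unrestricted} index, which holds automatically since each separator $s_{i_j}$ lies outside $U_T$. Careful tracking of the boundary conventions ($a_0=1$, $s_0=0$, $s_{k+1}=n$, and $a_\ell=0$ for $\ell<0$) is what makes the bijection between subsets $T\subseteq[k]$ and consecutive-block compositions of $\{0,\dots,k\}$ line up on both sides.
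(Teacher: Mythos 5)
Your argument is correct and complete. One important caveat about the comparison you asked for: the paper does not prove this statement at all --- it is quoted verbatim from Theorem 4.1 of \cite{borodin}, so there is no in-paper proof to measure your attempt against. Taken on its own terms, your route --- inclusion--exclusion over which of the prescribed zeros are relaxed to ones, factorization of each resulting event across the gaps left by the unrelaxed positions using one-dependence and stationarity, and identification of the signed sum with the Leibniz expansion of the upper-Hessenberg matrix $(a_{s_{j+1}-s_i})$ --- is a clean, self-contained derivation. The two delicate points are both handled correctly: first, the open integer intervals $(s_{i_j},s_{i_{j+1}})$ really do exhaust $U_T$ (no element of $T^c$ lies strictly between consecutive elements of $T^c$) and consecutive intervals are separated by a single omitted index, which is exactly what one-dependence requires (applied inductively to peel off one block at a time), with stationarity converting a run of $\ell$ consecutive ones into $a_{\ell+1}$; second, the classification of permutations satisfying $\sigma(i)\ge i-1$ into cycles on consecutive blocks is the standard Hessenberg-determinant expansion, and the bookkeeping is right: $c_{i,i-1}=a_0=1$, the block $[a,b]$ contributes only the jump $c_{a,b}=a_{s_{b+1}-s_a}$, and the total sign $(-1)^{k-m}=(-1)^{|T|}$ matches the inclusion--exclusion sign. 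As a sanity check, your sum over $T\subseteq\{1,2,3\}$ reproduces the paper's example $P(0,0,0)=1-3a_2+a_2^2+2a_3-a_4$ term by term.
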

\begin{example}
\[ P(0,0,0)= \det\begin{pmatrix} 1 & a_2 & a_3 & a_4 \\ 1 & 1 & a_2 & a_3 \\ 0 & 1 & 1 & a_2 \\ 0 & 0 & 1 & 1 \end{pmatrix} = 1-3a_2+a_2^2+2a_3-a_4. \]
\end{example}

Theorem 4.2 and Corollary 4.3 of \cite{borodin} give expressions for \eqref{fir} as skew Schur functions and for the higher order correlations in terms of the $a_i$.

It thus remains to determine $a_i$. These can be determined by the transfer matrix method and a serendipitous diagonalization.

\begin{prop}\label{prop:ai}
For odd $b \geq 3$ and the balanced coset representatives $0,\pm 1,\dots,\pm (b-1)/2$, $a_1=1$ and the chance $a_i$ of $i-1$ consecutive carries is

\[ a_i=\begin{cases} \frac{8}{b^{i+1}} \sum_{r=1}^{(b-1)/2} \lambda_r^{(i-1)/2} v_r^2  & \text{if }i >1 \text{ is odd}, \\
    \frac{8}{b^{i+1}} \sum_{r=1}^{(b-1)/2} \lambda_r^{(i-2)/2} v_r w_r       & \text{if }i >0 \text{ is even} \end{cases} \] where
\[ 1/\lambda_r = 4 sin^2((2r-1) \pi/2b), \ \ 1 \leq r \leq (b-1)/2, \]
\[ v_r = \sum_{j=1}^{(b-1)/2} sin((2r-1)j \pi/b), \ \ 1 \leq r \leq (b-1)/2, \] and
\[ w_r = \sum_{j=1}^{(b-1)/2} j \cdot sin((2r-1)j \pi/b), \ \ 1 \leq r \leq (b-1)/2. \]
\end{prop}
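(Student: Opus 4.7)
The plan is to reduce the probability to a spectral computation for the classical ``min matrix.'' Using the remainder representation, the $R_i$ are i.i.d.\ uniform on $\{-m,\ldots,m\}$ with $m=(b-1)/2$, and a $+b$ (resp.\ $-b$) carry at step $i$ occurs iff $R_i-R_{i+1}\ge m+1$ (resp.\ $\le -(m+1)$). Introduce the $b\times b$ indicator matrices $(M^+)_{r,s}=\mathbf{1}[r-s\ge m+1]$ and $(M^-)_{r,s}=\mathbf{1}[s-r\ge m+1]$. Two consecutive $++$ or $--$ carries would force $|R_i-R_{i+2}|\ge b+1$, exceeding the diameter $b-1$; hence a run of $i-1$ nonzero carries strictly alternates in sign. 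Summing over the two starting signs,
\[ a_i = b^{-i}\bigl[\mathbf{1}^T M^+M^-M^+\cdots\mathbf{1} + \mathbf{1}^T M^-M^+M^-\cdots\mathbf{1}\bigr], \]
with $i-1$ alternating factors in each product.

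The first simplification is to compute $N:=M^+M^-$. Its $(r,t)$ entry counts intermediate remainders $s\le \min(r,t)-(m+1)$, which equals $\min(r,t)$ when $r,t\ge 1$ and vanishes otherwise. Thus $N$ is supported on the block indexed by $\{1,\ldots,m\}$ and equals the classical min matrix there. By the involution $R\mapsto -R$, the matrix $M^-M^+$ restricts to the same min matrix on $\{-m,\ldots,-1\}$ and contributes equally, while $M^+\mathbf{1}$ restricts to $\mathbf{v}_m=(1,2,\ldots,m)^T$. This collapses the two cases to
\[ a_{2k+1} = 2b^{-(2k+1)}\,\mathbf{1}_m^T N^k\mathbf{1}_m, \qquad a_{2k+2} = 2b^{-(2k+2)}\,\mathbf{1}_m^T N^k\mathbf{v}_m. \]

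The main technical step is the serendipitous diagonalization of $N$. I would note that $N^{-1}$ is the tridiagonal matrix with $2$'s on the main diagonal, $-1$'s on the sub- and super-diagonals, and the bottom-right entry modified to $1$ --- a discrete Laplacian with mixed Dirichlet/Neumann boundary conditions. A direct substitution shows that $y_j^{(r)}=\sin((2r-1)j\pi/b)$ satisfies the interior three-term recurrence with eigenvalue $4\sin^2((2r-1)\pi/(2b))$, automatically meets $y_0=0$, and satisfies the remaining boundary condition iff $\cos((m+\tfrac12)\theta)=0$, i.e.\ $\theta=(2r-1)\pi/b$ for $r=1,\ldots,m$. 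A geometric-series calculation using $2m+1=b$ gives $\|y^{(r)}\|^2=b/4$, whence
\[ N^k = \frac{4}{b}\sum_{r=1}^m \lambda_r^k\,y^{(r)}\bigl(y^{(r)}\bigr)^T. \]

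Substituting back, $\mathbf{1}_m^T N^k\mathbf{1}_m=(4/b)\sum_r \lambda_r^k v_r^2$ and $\mathbf{1}_m^T N^k\mathbf{v}_m=(4/b)\sum_r \lambda_r^k v_r w_r$, with $v_r,w_r$ exactly the inner products in the statement; combining with the factor $2/b^i$ yields the advertised coefficient $8/b^{i+1}$ in both the odd and even cases. I expect the diagonalization step --- identifying the correct boundary conditions of $N^{-1}$, verifying the sine ansatz, and computing the norm --- to be the only nontrivial ingredient; the reduction to $N$ and the final substitution are direct bookkeeping.
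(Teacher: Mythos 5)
Your argument is correct and follows essentially the same route as the paper: reduce the run probability via a transfer matrix to powers of the $\min(i,j)$ matrix on $\{1,\dots,(b-1)/2\}$ (your $N=M^+M^-$ restricted to the positive remainders is exactly the paper's $A^TA$, and your vector $\mathbf{v}_m$ is the paper's $\mathbf{1}^TA$), then diagonalize with the sine eigenvectors and track the factors $2/b^i$ and $4/b$. The only substantive difference is that you derive the spectral decomposition of the min matrix yourself, by inverting it to a mixed Dirichlet/Neumann discrete Laplacian and checking the boundary condition $\cos((m+\tfrac12)\theta)=0$, whereas the paper simply cites this decomposition from the literature; your version is self-contained and the boundary-condition and norm computations check out.
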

\begin{proof}
The chance of any digit sequence of length $i$ in the remainder column is $1/b^i$. For the pattern $+-+-\cdots$ of length $i-1$, sequences of digit choices $x_1,x_2,\dots,x_i$ must be chosen so that $x_1,x_2$ yield a $+$ (so $x_1-x_2\geq (b+1)/2$), $x_2,x_3$ result in a $-$ (so $x_2-x_3\leq -(b+1)/2$), and so on. Admissible sequences can be enumerated as paths in a graph. As an example, for $b=7$, the digits are $0,\pm 1,\pm 2,\pm 3$. The corresponding graph has adjacency matrix
\[ M=\begin{pmatrix} 0 & 0 & 0 & 1 & 1 & 1 \\ 0 & 0 & 0 & 0 & 1 & 1 \\ 0 & 0 & 0 & 0 & 0 & 1 \\ 1 & 0 & 0 & 0 & 0 & 0 \\ 1 & 1 & 0 & 0 & 0 & 0 \\ 1 & 1 & 1 & 0 & 0 & 0 \end{pmatrix}=\begin{pmatrix} 0 & A \\ A^T & 0 \end{pmatrix},\quad A=\begin{pmatrix} 1 & 1 & 1 \\ 0 & 1 & 1 \\ 0 & 0 & 1 \end{pmatrix}. \]
Here the rows of $M$ are labeled by the vertices $-3,-2,-1,1,2,3$ from top to botton, and the columns of $M$ are labeled by the vertices $-3,-2,-1,1,2,3$ from left to right. Thus
\[ M^2=\begin{pmatrix} AA^T & 0 \\ 0 & A^TA \end{pmatrix},\quad M^3=\begin{pmatrix} 0 & AA^TA \\ A^TAA^T & 0 \end{pmatrix},\quad \] \[ M^4=\begin{pmatrix} AA^TAA^T & 0 \\ 0 & A^TAA^TA \end{pmatrix}. \]

Suppose next that $i$ is odd. The sum of the entries in $(A^TA)^{(i-1)/2}$ counts paths resulting in $+-+-\cdots$ ($i-1$ terms). The matrix $A^TA$ has $(a,b)$ entry $\min(a,b)$. This is the correlation matrix for random walk $S_1,S_2,\dots,S_{(b-1)/2}$ with $S_i=Y_1+\dots+Y_i$ and $Y_i$ are independent with mean $0$, variance $1$. Its spectral decomposition is known \cite{AL} : the $(b-1)/2\times (b-1)/2$ matrix $A^TA$ with $(i,j)$ entry $\min(i,j)$ has
\begin{enumerate}
\item eigenvalues $\lambda_r$ with $1/\lambda_r=4\sin^2((2r-1)\pi/2b)$, $1\leq r\leq (b-1)/2$;
\item corresponding eigenvectors
\[ \psi_r(k)=\sin((2r-1)k \pi/b), 1 \leq r \leq (b-1)/2; \]
\item the eigenvectors are orthogonal with \[ \langle\psi_i,\psi_j\rangle=\sum_k\psi_i(k)\psi_j(k)=\delta_{ij}b/4.\]
\end{enumerate}

From this, for any $\ell$,
\[ (A^TA)^{\ell}_{i,j}= \frac{4}{b} \sum_{r=1}^{(b-1)/2} \lambda_r^{\ell} \psi_r(i) \psi_r(j). \]
Summing in $i$ and $j$ gives
\[ \mathbf{1}^T(A^TA)^{\ell}\mathbf{1}= \frac{4}{b} \sum_{r=1}^{(b-1)/2} \lambda_r^{\ell} v_r^2,\quad v_r=\sum_j \psi_r(j) = \sum_{j=1}^{(b-1)/2} sin((2r-1)j \pi/b). \]
This gives the result for an even number of steps (so for $i$ odd), since the probability of any length $i$ digit sequence in the
remainder column is $1/b^i$, and one must multiply by 2 to account for the string $+-+- \cdots +-$ ($i-1$ terms) and $-+-+ \cdots -+$ ($i-1$ terms).

Suppose next that $i$ is even. By arguing as in the $i$ odd case, to compute the chance of $-+ \cdots -$ ($i-1$ terms), we need to study
\[ \mathbf{1}^T A(A^TA)^{\ell}\mathbf{1} .\] Since $\mathbf{1}^T A = (1, 2, \cdots ,(b-1)/2)$ we get that
\begin{eqnarray*}
\mathbf{1}^T A(A^TA)^{\ell}\mathbf{1} & = & \frac{4}{b} \sum_{r=1}^{(b-1)/2} \lambda_r^{\ell} \sum_{i=1}^{(b-1)/2} \sum_{j=1}^{(b-1)/2} i \psi_r(i) \psi_r(j) \\
& = & \frac{4}{b} \sum_{r=1}^{(b-1)/2} \lambda_r^{\ell} v_r w_r,
\end{eqnarray*} where
\[  v_r = \sum_{j=1}^{(b-1)/2} sin((2r-1)j \pi/b) \ , \ w_r = \sum_{j=1}^{(b-1)/2} j \cdot sin((2r-1)j \pi/b).                          \] To obtain the theorem, we now set $\ell = (i-2)/2$, divide by $b^i$ (the probability of any length $i$ digit sequence in the remainder column), and multiply by $2$ (to also account for the length $i-1$ sequence $+- \cdots +$).
\end{proof}
\begin{example}[Balanced ternary]
With $b=3$ and coset representatives $0,\pm 1$, the matrix $M$ is $\left(\begin{smallmatrix} 0 & 1 \\ 1 & 0 \end{smallmatrix}\right)$. Thus $M^2=\text{Id}$, $M^3=M$, and so on. It follows directly that the chance of $+-+-\cdots$ with length $i-1$ is $a_i=1/3^i$ for all $i$. For the zero/one consolidation the chance of $i-1$ ones is $2/3^i$.
\end{example}
\begin{example}[Base $5$]
With $b=5$ and coset representatives $0,\pm 1,\pm 2$, the matrix $M$ is
\[ M=\begin{pmatrix} 0 & A \\ A^T & 0 \end{pmatrix},\quad A=\begin{pmatrix} 1 & 1 \\ 0 & 1 \end{pmatrix} \]
with
\[ A^TA=\begin{pmatrix} 1 & 1 \\ 1 & 2 \end{pmatrix},\quad (A^TA)^2=\begin{pmatrix} 2 & 3 \\ 3 & 5 \end{pmatrix},\quad (A^TA)^3=\begin{pmatrix} 5 & 8 \\ 8 & 13 \end{pmatrix},\quad\cdots \]
We recognize $(A^TA)^h=\left(\begin{smallmatrix} F_{2h-1} & F_{2h} \\ F_{2h} & F_{2h+1} \end{smallmatrix}\right)$, where $F_i$ is the $ith$ Fibonacci number. The sum of these entries is $F_{2h-1}+2F_{2h}+F_{2h+1}=F_{2h+3}$. So $a_{2i+1}= 2 F_{2i+3}/5^{2i+1}$. Similarly $a_{2i}= 2 F_{2i+2}/5^{2i}$. This gives $a_i=2 F_{i+2}/5^i$ for all $i$:
\[ a_i=\frac{2 F_{i+2}}{5^i}=10 \sqrt{5}\left[\left(\frac{1+\sqrt{5}}{10}\right)^{i+2}-\left(\frac{1-\sqrt{5}}{10}\right)^{i+2}\right]. \]
\end{example}

\section{Acknowledgements} Diaconis was supported by NSF grant DMS 08-04324. Fulman was supported by NSA grant H98230-13-1-0219. The authors thank Danny Goldstein, Robert Guralnick, and Eric Rains for encouraging them to study balanced carries, and Xuancheng Shao and Kannan Soundararajan for their help with Section \ref{point}.

\end{document}